\title{The symmetric function theorem via the Fa\`a di Bruno formula}
\author[S. Van Hille]{Siegfried Van Hille}
\address{Fields Institute, 222 College Street, M5T 3J1 Toronto, Ontario, Canada}
\email{svanhill@fields.utoronto.ca}
\urladdr{\href{https://sites.google.com/view/siegfriedvanhille/academic}{https://sites.google.com/view/siegfriedvanhille/academic}}
\DeclareMathOperator{\N}{\mathbb N}
\DeclareMathOperator{\R}{\mathbb R}
\theoremstyle{plain}
\newtheorem{theorem}{Theorem}[section]
\newtheorem{corollary}[theorem]{Corollary}
\newtheorem{lemma}[theorem]{Lemma}
\newtheorem{proposition}[theorem]{Proposition}
\newtheorem*{theorem*}{Theorem}
\newtheorem*{corollary*}{Corollary}
\newtheorem*{proposition*}{Proposition}
\newtheorem*{conj*}{Conjecture}
\theoremstyle{definition}
\newtheorem*{conv*}{Convention}
\newtheorem{remark}[theorem]{Remark}
\newtheorem{ex}[theorem]{Example}
\begin{document}

\begin{abstract}
The symmetric function theorem states that a polynomial that is invariant under permutation of variables, is a polynomial in the elementary symmetric polynomials. We deduce this classical result, in the analytic setting, from the multivariate Fa\`a di Bruno formula. In two variables, this allows us to completely determine all coefficients that occur in the inductive equations.
\end{abstract}

\thanks{This publication was supported by the Fields Institute for Research in Mathematical Sciences. Its contents are solely the responsibility of the authors and do not necessarily represent the official views of the Institute. \href{http://www.fields.utoronto.ca}{Fields Institute for Research in Mathematical Sciences} \\ The author would like to thank L\'eo Jiminez for discussing the interesting combinatorial phenomenon we encountered, especially for writing down a formula for the map $\phi$.}

\keywords{Symmetric polynomials, symmetric functions, Fa\`a di Bruno formula, Stirling numbers}

\subjclass[2020]{05A17,13A50, 26B12}

\maketitle

The fundamental theorem of symmetric function states that a polynomial in $n$ variables that is invariant under permuting its variables, is itself a polynomial in the so-called \emph{elementary symmetric polynomials}. To be precise, given a nonzero $n \in \N$, the $n$ elementary symmetric polynomials $\sigma^1, \ldots, \sigma^n$ are defined by:

\begin{align*}
&& \sigma^k(x) = \sum_{i_1 < \ldots < i_k} x_{i_1} \cdots x_{i_k} && (1 \leq k \leq n).
\end{align*}

In particular, $\sigma^1(x) = x_1 + \ldots + x_n$ and $\sigma^n(x) = x_1 \cdots x_n$. We set $\sigma = (\sigma^1, \ldots, \sigma^n)$. 

For a permutation $\pi \in \mathbb S^n$, let $\pi(x) = (x_{\pi^{-1}(1)}, \ldots, x_{\pi^{-1}(n)})$ be the usual action of $\mathbb S^n$ on a tuple. A function $f$ is \emph{symmetric} if we have that $f(\pi(x)) = f(x)$ for all $\pi \in \mathbb S^n$. Clearly, the elementary symmetric polynomials are symmetric. The symmetric function theorem states that the elementary symmetric polynomials are the building blocks for all analytic functions satisfying this property.

\begin{theorem*}[Symmetric function theorem]
Let $f(x): V \to \R$ be an analytic function, where $V \subset \R^n$ is an open neighborhood of the origin. If $f$ is symmetric, then there exists a unique analytic function $g$ such that $f = g \circ \sigma$. Moreover, if $f$ is a polynomial of degree $d$, then $g$ is a polynomial of degree at most $d$.
\end{theorem*}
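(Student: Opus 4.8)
The plan is to work with Taylor expansions at the origin and to reduce the statement to a linear-algebra question about homogeneous components, which the Fa\`a di Bruno formula will answer explicitly. First I would reduce to the homogeneous case. Since the action of $\mathbb S^n$ preserves the total degree of a monomial, each homogeneous component $f_d$ of the Taylor series $f = \sum_{d \geq 0} f_d$ is again symmetric, hence $f_d$ is a symmetric polynomial that is homogeneous of degree $d$. Correspondingly I would look for $g = \sum_d g_d$, where $g_d$ collects the terms $d_\beta\, y^\beta$ of $g$ (in variables $y = (y_1, \ldots, y_n)$ standing for $\sigma^1, \ldots, \sigma^n$) with $\sum_k k\beta_k = d$, because substituting a monomial $\prod_{k=1}^n (\sigma^k)^{\beta_k}$ produces a symmetric polynomial that is homogeneous of weighted degree $\sum_k k\beta_k$. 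The key bookkeeping observation is that the resulting linear system is square: the symmetric polynomials of degree $d$ have a basis of monomial symmetric polynomials $m_\lambda$ indexed by partitions $\lambda \vdash d$ of length at most $n$, while the candidate expressions $\prod_k (\sigma^k)^{\beta_k}$ are indexed by partitions of $d$ into parts of size at most $n$, and conjugation of partitions is a bijection between these two index sets.

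Next I would feed $f_d = g_d \circ \sigma$ into the multivariate Fa\`a di Bruno formula to read off the matrix of this system. Evaluating $\partial^\alpha(g \circ \sigma)$ at the origin is especially clean here: since $\sigma^k$ is homogeneous of degree $k$, its only nonvanishing partial derivatives at $0$ are those of order exactly $k$, taken with respect to $k$ distinct variables. Hence every nonzero term of the formula distributes the multi-index $\alpha$ into blocks, one block of size $k$ for each factor $\sigma^k$ that is differentiated, which reproduces the weighted-degree constraint $\sum_k k\beta_k = |\alpha|$ automatically and expresses each Taylor coefficient of $f_d$ as an explicit linear combination of the coefficients of $g_d$. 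The heart of the argument is to show this transition matrix is invertible; I expect to prove it is triangular with nonzero diagonal with respect to the dominance order on partitions, with the leading term of $\prod_k(\sigma^k)^{\beta_k}$ being the monomial symmetric polynomial indexed by the conjugate partition. The Fa\`a di Bruno coefficients supply these entries in closed form, and, as the abstract foreshadows, counting the set partitions involved is where the Stirling numbers enter.

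With invertibility in hand, in each degree $d$ the coefficients of $g_d$ are uniquely determined by those of $f_d$, which simultaneously yields existence and uniqueness of $g$ as a formal power series and, once convergence is established, as an analytic germ; uniqueness of the analytic $g$ then follows because its Taylor coefficients are forced. The degree bound is immediate from the weighted grading: if $f$ is a polynomial of degree $d$ then $g_{d'} = 0$ for $d' > d$, and since the ordinary degree of $\prod_k(\sigma^k)^{\beta_k}$ in the variables $\sigma^1, \ldots, \sigma^n$ equals $\sum_k \beta_k \leq \sum_k k\beta_k \leq d$, the resulting $g$ has degree at most $d$ (and is automatically a polynomial, the sum over $d'$ being finite).

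The two places where I expect real work are the invertibility of the Fa\`a di Bruno matrix and convergence. For the former, the triangularity must be extracted from the combinatorics of the formula rather than assumed, and showing that the diagonal entries are nonzero is the crux. For the latter, I would derive from the (unit-)triangular structure an estimate bounding the coefficients of $g$ geometrically in terms of those of $f$: since $f$ is analytic its coefficients decay geometrically, and propagating this bound through the inverse system should show that $g$ converges on a neighborhood of the origin, completing the analytic statement.
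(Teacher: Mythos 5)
Your proposal follows essentially the same route as the paper: the Fa\`a di Bruno formula yields, for each degree $d$, a square linear system in the weight-$d$ coefficients of $g$ (the paper's map $\phi$ is exactly your conjugation-of-partitions bijection), and its triangularity with unit diagonal --- which you phrase via dominance order and leading monomials, the paper via reverse lexicographic order on class representatives and a pigeonhole argument --- gives existence and uniqueness of the coefficients, with the same weighted-degree bound for polynomial $f$. The two points you defer (proving triangularity and handling convergence) are precisely the ones the paper settles in its lemmas and closing remark, so your plan matches the paper's proof in structure and substance.
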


The theorem is usually just stated for polynomials. It is classical and there are many proofs available, often with induction on the degree and number of variables. We refer to \cite{symmsurvey} for references, history and a different proof of the theorem.

We will give a direct proof, without using induction, but will use a technical combinatorial result to do so (the multivariate Fa\`a di Bruno formula, see Proposition \ref{faa}). We believe that it is interesting that this special application of the Fa\`a di Bruno formula naturally implies some combinatorial features the other proofs also have, such as a relation between the degree $d$ monomials of $f$ and monomials of weight $d$ of $g$. This short paper is written in such a way that the required combinatorial tools naturally follow from the proof strategy.

We will show that the coefficients of the power series representation of $f$ uniquely determine those of $g$ on $\sigma(V)$. This is sufficient in the analytic case. The theorem is still true in other case, for example smooth functions and Carleman-Denjoy classes, see \cite{symmsmooth}, \cite{symmCr} and \cite{symmgevrey}. These are questions of analysis and are in that sense independent of finding the function $g$.

The paper is structured as follows: in Section \ref{sec1} we introduce notation and the combinatorial proposition on which our proof is base. Then, we prove the symmetric function theorem in Section \ref{sec2}. Consequently, we give some remarks and examples in Section \ref{sec3}.  Finally, we make the system completely explicit in the two variable case in Section \ref{sec4}.

\section{Notation and the Fa\`a di Bruno formula} \label{sec1}

We first settle the notation that will be used in the proof of the main theorem and that is necessary to state the main combinatorial result that we will use to prove the symmetric function theorem.

Let $\nu \in \N^n$, then the \textbf{degree} of $\nu$ is defined by $$|\nu| = \sum_{i=1}^n \nu_i,$$ and the \textbf{weight} of $\nu$ is: $$||\nu|| = \sum_{i=1}^n i \nu_i.$$ For $\nu, \mu \in \N^n$, we define the equivalence relation $\nu \sim \mu$ as follows: $$ \nu \sim \mu \iff \exists \pi \in \mathbb S^n: \nu = \pi(\mu).$$

Furthermore, we define:

\begin{align*}
x^\nu &= \prod_{i=1}^n x_i^{\nu_i}, \\
\nu! &= \prod_{i=1}^n \nu_i!,
\end{align*}

with $0! = 1$ and $0^0 = 1$.

Finally, we use introduce the notation of \cite[Section 3]{faa}, although the roles of $f$, $g$ and $h$ are permuted to fit the notation here. Proposition \ref{faa} below gives an explicit formula to compute the coefficients $f_\nu$ of a series $$f(x) = \sum_{\nu \in \N} f_\nu x^\nu,$$ which is the composition of a series $$g(x) = \sum_{\nu \in \N^n} g_\nu x^\nu$$ and $$h(x) = (h^1(x), \ldots, h^n(x)),$$ where for $1 \leq i \leq n$: $$h_i(x) = \sum_{\nu \in \N^n} \frac{h^i_\nu}{\nu!}x^\nu.$$

We denote $h_\nu = (h^1_\nu, \ldots, h^n_\nu)$. Since in our setting $h$ will be the polynomial map $\sigma$, and $\sigma(0) = 0$, we assume that $h$ is polynomial and $h(0) = 0$. Consequently, we can use \cite[Formula 3.7]{faa}, which is the following formula.

\begin{proposition}[{\cite[Theorem 3.1]{faa}}] \label{faa}
Let $f = g \circ h$ be as above, $\nu \in \N^n$ be nonzero and set $n = |\nu|$. Then we have:
\begin{equation} \label{faaeq}
f_\nu = \sum_{1 \leq |\lambda| \leq |\nu|} g_\lambda \lambda! \sum_{p(\nu,\lambda)} \prod_{j=1}^{n} \frac{ (h_{l_j})^{k_j} }{ k_j! (l_j!)^{|k_j|} },
\end{equation}
where
\begin{multline*}
p(\nu,\lambda) = \{ (k_1, \ldots, k_{n}; l_1, \ldots, l_n) \mid \text{ for some } 1 \leq s \leq n, \\
 k_i = 0 \text{ and } l_i = 0 \text{ for } 1 \leq i \leq n-s; |k_i| > 0 \text{ for } n-s+1 \leq i \leq n; \\
\text{ and } 0 \prec l_{n-s+1} \prec \ldots \prec l_n \text{ are such that } \\
\sum_{i=1}^n k_i = \lambda \text{ and } \sum_{i=1}^n |k_i| l_i = \nu \},
\end{multline*}
with $\nu \prec \mu$ if and only if $|\nu| < |\mu|$ or, if $|\nu| = |\mu|$, then $\nu$ comes before $\mu$ lexicographically.
\end{proposition}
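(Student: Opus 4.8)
The plan is to prove the formula by a purely combinatorial expansion of the composite series, with no appeal to derivatives. First I would evaluate the outer series at $h$ and expand the monomials $h(x)^\lambda$:
\[
g(h(x)) = \sum_{\lambda \in \N^n} g_\lambda \, h(x)^\lambda = \sum_{\lambda \in \N^n} g_\lambda \prod_{i=1}^n h^i(x)^{\lambda_i},
\]
so that $f_\nu$ is obtained by extracting, for each $\lambda$, the coefficient of $x^\nu$ from a product of $|\lambda|$ series. Since $h(0)=0$, every factor $h^i$ begins in degree $\geq 1$; hence a product of $|\lambda|$ such factors cannot contribute to $x^\nu$ unless $|\lambda| \leq |\nu|$, which explains the summation range, while the term $\lambda = 0$ affects only $f_0$, so for nonzero $\nu$ we may assume $1 \leq |\lambda|$.

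Next I would perform the coefficient extraction. Expanding the product amounts to choosing, from each of the $|\lambda|$ factors, a monomial $x^m$ with $|m| \geq 1$, subject to the chosen monomials summing to $\nu$; a factor of type $i$ that selects $x^m$ contributes the coefficient $h^i_m / m!$. The key step is to organise these choices not factor by factor, but by the \emph{multiset of monomials used}. Let $0 \prec l_1 \prec \ldots \prec l_s$ be the distinct monomials that appear, ordered by $\prec$, and let $k_j \in \N^n$ record the multiplicities, so that $(k_j)_i$ is the number of type-$i$ factors assigned the monomial $l_j$. The two defining constraints of $p(\nu,\lambda)$ are then exactly the bookkeeping identities of this regrouping: $\sum_j k_j = \lambda$ expresses that each type $i$ is used $\lambda_i$ times in total, and $\sum_j |k_j| l_j = \nu$ expresses that the selected monomials sum to $\nu$. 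Padding the index set to length $n = |\nu|$ with zero entries is harmless, since $|k_j| \geq 1$ and $|l_j| \geq 1$ force $s \leq |\nu|$, and the convention $0^0 = 0! = 1$ makes each padded factor equal to $1$.

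It remains to count and weight each grouping. For fixed data $(k_1,\ldots,k_s;\,l_1,\ldots,l_s)$, the number of factor-assignments producing it splits over types: for each $i$ it is the multinomial coefficient $\lambda_i! / \prod_j (k_j)_i!$, and the product over $i$ collapses to $\lambda! / \prod_j k_j!$. Every such assignment carries the same weight, because the $(k_j)_i$ factors of type $i$ using $l_j$ contribute $\bigl(h^i_{l_j}/l_j!\bigr)^{(k_j)_i}$, and the product over $i$ and $j$ yields $\prod_j (h_{l_j})^{k_j} / (l_j!)^{|k_j|}$. Combining the multinomial count, the shared weight, and the outer coefficient $g_\lambda$ reproduces Equation \eqref{faaeq}. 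The hard part will be organisational rather than conceptual: one must verify that grouping by $(k_j;l_j)$ genuinely partitions the set of factor-assignments with no double counting, which is precisely why the monomials are required to be strictly $\prec$-increasing and the multiplicities satisfy $|k_j| > 0$. Checking that this encoding is a bijection onto $p(\nu,\lambda)$ is the step that demands the most care.
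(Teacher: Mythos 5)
Your proposal is correct, but it takes a genuinely different route from the paper, which in fact gives no proof of this statement at all: Proposition \ref{faa} is imported wholesale from \cite[Theorem 3.1]{faa}, where it is obtained from the higher-order multivariate chain rule, i.e.\ by repeated differentiation of $f = g \circ h$ and identification of Taylor coefficients (this is what the remark following the proposition alludes to, and it is why the paper calls the $k_i$ ``multiplicities'' and the $l_i$ ``orders of differentiation''). You instead work purely at the level of power series: expand $g(h(x)) = \sum_\lambda g_\lambda \prod_i h^i(x)^{\lambda_i}$, extract the coefficient of $x^\nu$, and regroup the monomial selections by the multiset of distinct monomials used. Your bookkeeping is right on every point that matters: $h(0)=0$ gives the range $1 \leq |\lambda| \leq |\nu|$ for nonzero $\nu$; ordering the distinct monomials by the total order $\prec$ and recording multiplicity vectors $k_j$ reproduces exactly the two constraints $\sum_j k_j = \lambda$ and $\sum_j |k_j| l_j = \nu$ defining $p(\nu,\lambda)$; the fiber of each grouping has cardinality $\prod_i \bigl(\lambda_i!/\prod_j (k_j)_i!\bigr) = \lambda!/\prod_j k_j!$, which is precisely the prefactor $\lambda! \prod_j (1/k_j!)$ in \eqref{faaeq}; the factor $(l_j!)^{|k_j|}$ correctly accounts for the paper's asymmetric normalization, in which $h^i$ carries coefficients $h^i_\nu/\nu!$ while $f$ and $g$ do not; and the padding to $|\nu|$ index pairs is justified since $|k_j|,|l_j| \geq 1$ and $\sum_j |k_j| l_j = \nu$ force $s \leq |\nu|$. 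What each approach buys: the derivative-based proof in \cite{faa} also yields the finite-order statement for functions of class $C^k$, whereas your argument is self-contained, avoids analysis, and makes the combinatorial structure of $p(\nu,\lambda)$ transparent --- which is exactly the structure the paper exploits later in the ``selection'' argument of the first lemma of Section \ref{sec2}. The one point you should make explicit is the justification for rearranging the multiple series: either state the identity as one of formal power series (which suffices for the paper's purposes, since it is only used to equate coefficients, and $h = \sigma$ is a polynomial), or invoke absolute convergence on a small polydisc around the origin to license the regrouping.
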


\begin{remark}
In fact, this is a direct consequence of the multivariate chain rule for arbitrary order derivatives, also known as the Fa\`a di Bruno formula. In the set $p(\nu,\lambda)$, one should consider the $k_i$ as multiplicities and $l_i$ as the order of differentiation to better understand the two relations they are subject to. The sparse support of $\sigma$, together with these relations, will be crucial in our proof of the main theorem.
\end{remark}

\section{Proof of the theorem} \label{sec2}

This entire section is the proof of the main theorem. So, suppose $f: V \subset \R$ is analytic, where $V$ is an open neighborhood of the origin. The idea is straightforward: for each $\nu \in \N^n$, we use Formula \ref{faaeq} to get an equation:

\begin{equation} \label{system}
f_\nu = \sum_{1 \leq |\lambda| \leq |\nu|} g_\lambda \lambda! \sum_{p(\nu,\lambda)} \prod_{j=1}^{n} \frac{ (\sigma_{l_j})^{k_j} }{ k_j! (l_j!)^{|k_j|} }. \tag{$\star$}
\end{equation}

This gives us infinitely many \emph{linear} equations in the variables $g_\lambda$. Our goal is to show that there is a unique solution. Note that for $\nu = (0, \ldots, 0)$, Formula \ref{faa} doesn't apply, but in that case we have $g_{(0, \ldots 0)} = f_{(0, \ldots 0)}$, of course.

Firstly one checks that, since the function $f$ and map $\sigma$ are symmetric, both sides are invariant under the action of $\mathbb S^n$. Hence, we only have to consider one equation for each equivalence class. 

Now, due to the specific form of the map $\sigma$, we expect many $g_\lambda$ to have a zero coefficient. This is what we show in the next lemma.

\begin{lemma}
Let $\nu \in \N^n$ be nonzero and let $d = |\nu|$. In equation (\ref{system}), one can replace $\{1 \leq |\lambda| \leq |\nu| \}$ by $\{ || \lambda || = d \}$.
\end{lemma}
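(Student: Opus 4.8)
The plan is to show that in (\ref{system}) every summand indexed by a $\lambda$ with $||\lambda|| \neq d$ in fact vanishes, so that the outer index set $\{1 \leq |\lambda| \leq |\nu|\}$ may be shrunk to $\{||\lambda|| = d\}$ without changing either side. The driving mechanism is the sparse support of the derivatives of $\sigma$, which I would isolate first. Since $\sigma^i$ is homogeneous of degree $i$, the coefficient $\sigma^i_{l_j} = \partial^{l_j}\sigma^i(0)$ is the value at the origin of a polynomial homogeneous of degree $i - |l_j|$, and this is zero unless $i = |l_j|$. Thus the vector $\sigma_{l_j} = (\sigma^1_{l_j}, \ldots, \sigma^n_{l_j})$ is supported on the single coordinate $i = |l_j|$ (and is identically zero once $|l_j| > n$).

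Next I would record what this forces on the multiplicities. Writing $(\sigma_{l_j})^{k_j} = \prod_{i=1}^n (\sigma^i_{l_j})^{(k_j)_i}$ and using the convention $0^0 = 1$, such a factor is nonzero only when $k_j$ is supported on the very same coordinate $|l_j|$ on which $\sigma_{l_j}$ lives; otherwise a positive power of a zero coefficient appears. In particular, for a surviving tuple one has $|k_j| = (k_j)_{|l_j|}$, and hence $\sum_{i=1}^n i\,(k_j)_i = |l_j|\,|k_j|$, with inactive positions ($k_j = 0$) contributing $0$.

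The weight identity then follows by bookkeeping. For a surviving tuple in $p(\nu,\lambda)$, the constraint $\lambda = \sum_j k_j$ gives
$$||\lambda|| = \sum_{i=1}^n i \lambda_i = \sum_{j=1}^n \sum_{i=1}^n i\,(k_j)_i = \sum_{j=1}^n |l_j|\,|k_j|,$$
where the last step is the preceding paragraph. On the other hand, taking degrees in the constraint $\nu = \sum_j |k_j|\, l_j$ yields $d = |\nu| = \sum_{j=1}^n |k_j|\,|l_j|$. Comparing the two, every nonvanishing summand satisfies $||\lambda|| = d$, so each $\lambda$ with $||\lambda|| \neq d$ contributes only an empty (vanishing) inner sum and may be dropped, giving the claimed replacement.

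The only delicate point is the first two paragraphs: correctly deducing from homogeneity that each $\sigma_{l_j}$ is concentrated on one coordinate, and handling the $0^0 = 1$ convention so that the support constraint on $k_j$ is pinned down exactly. Once that is in place, the identity $||\lambda|| = |\nu|$ is a direct computation, so I do not expect a genuine obstacle beyond this setup.
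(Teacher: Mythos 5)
Your proof is correct and follows essentially the same route as the paper: both arguments show that a nonvanishing term forces each $k_j$ to be concentrated on the single coordinate $|l_j|$, and then combine the two constraints $\sum_j k_j = \lambda$ and $\sum_j |k_j|\, l_j = \nu$ to get $||\lambda|| = |\nu|$. The only cosmetic difference is that you justify the support concentration via homogeneity of $\sigma^i$, whereas the paper reads it off from the explicit (pairwise disjoint) supports of the component functions --- the same underlying fact.
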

\begin{proof}
We will show that if an element of $p(\nu,\lambda)$ yields a nonzero contribution $$ \prod_{j=1}^{n} \frac{ (\sigma_{l_j})^{k_j} }{ k_j! (l_j!)^{|k_j|} }$$ to the coefficient of $g_\lambda$, then $||\lambda|| = d$. Consequently, if $||\lambda|| \neq d$, then its coefficient is zero, so we may indeed replace the summation of $\lambda$ as stated.

To prove this, one plays with the simple support of the component functions of $\sigma$ and the relations the $k_i$ and $l_i$ satisfy (see Theorem \ref{faa}). To be precise, let us fix an element of $p(\nu,\lambda)$, that is $k_1, \ldots, k_n$, $l_1, \ldots, l_n$ and some $1 \leq s \leq n$. Now, since the first $n-s$ of these are zero, we discard them and reindex. So we may assume we have $k_1, \ldots, k_s$, nonzero, and $0 \prec l_1 \prec \ldots \prec l_s$. 

Now, for $1 \leq k \leq n$, note that the support of $\sigma^k$ is exactly all $\mu \in \{0,1\}^n$ with $|\mu| = k$. In particular, the supports of the component functions of $\sigma$ are pairwise disjoint. Consequently, to obtain a nonzero contribution, each $k_j$ must be of the form $(0, \ldots, 0, t_j, 0, \ldots, 0)$, for some $t_j \in \N$. We say that $k_j$ ``selects" a certain component function of $\sigma$. Note that if $k_j$ selects $\sigma^k$, then $|l_j| = k$, or the corresponding term is zero. Now, since $$\sum_{j=1}^s k_i = \lambda, $$ and because of the special form of the $k_j$, we have that
\[
\sum_{k_j \text{ selects } \sigma^k} |k_j| = \lambda_k.
\]
Then we put everything together to find that:
\[
|\nu| = \sum_{j=1}^s |k_j| |l_j| = \sum_{i = 1}^n \sum_{k_j \text{ selects } \sigma^i} |k_j| |l_j| = \sum_{i = 1}^n i \sum_{k_j \text{ selects } \sigma^i} |k_j| = \sum_{i=1}^n i \lambda_i =  ||\lambda||.
\]
\end{proof}

\begin{remark}
We will frequently use the reindexing of an element of $p(\nu,\lambda)$, as in the proof above, without mentioning it.
\end{remark}

Thus, given $d \in \N$, only the $g_\lambda$ with $\lambda$ of weight $d$ appear in the equations (\ref{system}) corresponding to $\nu$ of degree $d$. So the natural question we ask ourselves is: do we have sufficiently many equations to determine them? Before proving that this is the case, let us consider some elementary examples of equations, which might be handy to understand the set $p(\nu,\lambda)$ a bit better.

\begin{ex} \hspace{1cm} \label{observation}
\begin{enumerate}
\item Let us first consider the (equivalence class) of $(d, 0, \ldots, 0)$, which corresponds to $x_i^d$, for some $1 \leq i \leq n$. Arguably, these are the simplest monomials. Now it is easy to see that there is only one element of $p(\nu,\lambda)$ that yields a nonzero contribution, namely: $$k_1 = (d, 0, \ldots, 0) \text{ and } l_1 = (1, 0, \ldots, 0).$$ This corresponds to the fact that there is only one way to obtain $x_i^d$ using the elementary symmetric polynomials, which is using $\sigma^1(x)^d$. Thus, for each $d \in \N$ we find the equation: $$f_{(d, 0, \ldots, 0)} = g_{(d, 0, \ldots, 0)}.$$

\item Now let us look at the second least complicated monomial, say $x_1^{d-1}x_2$, which corresponds to $\nu = (d-1,1, 0, \ldots, 0)$. Now one has two elements in $p(\nu,\lambda)$ which yield a nonzero contribution, namely:
\begin{align*}
k_1 = (d-1, 0, \ldots, 0) &\text{ and } l_1 = (d-1, 0, \ldots, 0), \\
k_2 = (1, 0, \ldots, 0) &\text{ and } l_2 = (0, 1, 0, \ldots, 0),
\end{align*}
in which case $\lambda = (d, 0, \ldots, 0)$, and
\begin{align*}
k_1 = (d-2, 0, \ldots, 0) &\text{ and } l_1 = (d-2, 0, \ldots, 0), \\
k_2 = (0, 1, \ldots, 0) &\text{ and } l_2 = (1, 1, 0, \ldots, 0),
\end{align*}
where $\lambda = (d-2,1, 0, \ldots, 0)$.

\item Note that one can write any $\nu \in \N^n$ using $\lambda = (|\nu|, 0, \ldots, 0)$, by setting:
\begin{align*}
k_1 = (\nu_1, 0, \ldots, 0) &\text{ and } l_1 = (1, 0, \ldots, 0), \\
k_2 = (\nu_2, 0, \ldots, 0) &\text{ and } l_2 = (0, 1, 0, \ldots, 0), \\
&\;\;\;\vdots \\
k_n = (\nu_n, 0, \ldots, 0) & \text{ and } l_n = (0, \ldots, 0, 1).
\end{align*}
In other words, we will find the variable $g_\lambda$ with $\lambda = (|\nu|, 0, \ldots, 0)$ in each equation.
\end{enumerate}
\end{ex}

It is advised to the reader to make a few more examples. What one might hope, after considering $(1)$ and $(2)$ above, is that one obtains a triangular system, and this is indeed the case.  But first, we will show that it has equally many equations as variables.

\begin{lemma}\label{square}
Let $d \in \N$. Then we have a bijection between the sets $$\{ \nu \in \N^n \mid |\nu| = d \} / \sim$$ and $$\{ \nu \in \N^n \mid ||\nu|| = d\}.$$
\end{lemma}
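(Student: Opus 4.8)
The plan is to recognize the two finite sets as different combinatorial encodings of integer partitions of $d$, and then to realize the bijection between them as conjugation of Young diagrams. The starting observation is that $\sim$ is exactly the orbit relation of the $\mathbb S^n$-action, so each equivalence class in $\{ \nu \in \N^n \mid |\nu| = d \} / \sim$ has a unique weakly decreasing representative $\nu_1 \geq \nu_2 \geq \cdots \geq \nu_n \geq 0$. Dropping the trailing zeros sends this representative to a partition of $d$ with at most $n$ (nonzero) parts, and conversely any such partition, padded by zeros to length $n$ and then permuted, recovers exactly one class. First, then, I would record that this identifies the left-hand set with the set of partitions of $d$ into at most $n$ parts.

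Next I would reinterpret the right-hand set. Reading an element $\mu \in \N^n$ with $||\mu|| = d$ as a vector of multiplicities — that is, letting $\mu_i$ be the number of parts equal to $i$ — turns the weight $||\mu|| = \sum_{i=1}^n i\mu_i = d$ into the statement that the total being partitioned is $d$, while the range $1 \leq i \leq n$ forces every part to have size at most $n$. This identifies $\{ \nu \in \N^n \mid ||\nu|| = d \}$ with the set of partitions of $d$ all of whose parts are at most $n$. Both identifications are elementary, and the degree/weight bookkeeping that makes each total equal to $d$ is automatic from the definitions.

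Finally, I would invoke the classical conjugation (transposition) of Young diagrams: it is an involution on partitions of $d$ that interchanges the number of parts with the size of the largest part. Consequently it restricts to a bijection between partitions of $d$ with at most $n$ parts and partitions of $d$ with largest part at most $n$. Composing the three maps — decreasing representative, conjugation, multiplicity encoding — produces the asserted bijection.

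The routine parts (uniqueness of the decreasing representative, that the multiplicity encoding is a genuine bijection onto partitions with parts $\leq n$) are immediate, so the heart of the argument is the last step: the recognition that the two constraints cutting out the sets, namely \emph{at most $n$ parts} versus \emph{parts at most $n$}, are precisely the pair of conditions swapped by partition conjugation. That conceptual matching, rather than any computation, is the step I expect to carry the weight of the proof.
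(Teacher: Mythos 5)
Your proof is correct, and the bijection it produces is in fact the very same map as the paper's, reached by a different route. The paper writes the map down explicitly: on the unique weakly decreasing representative it defines $\phi(\nu) = (\nu_1-\nu_2, \nu_2-\nu_3, \ldots, \nu_{n-1}-\nu_n, \nu_n)$ and then checks by hand that $||\phi(\nu)|| = d$ and that $\phi$ is bijective. Your composite map (decreasing representative, then conjugation of the Young diagram, then multiplicity encoding) coincides with this $\phi$: for weakly decreasing $\nu$, the number of parts of the conjugate partition equal to $i$ is exactly $\nu_i - \nu_{i+1}$ (with $\nu_{n+1} := 0$), so the multiplicity vector of the conjugate is precisely $(\nu_1-\nu_2, \ldots, \nu_{n-1}-\nu_n, \nu_n)$. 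What your route buys is conceptual clarity: bijectivity and the exchange of the two defining conditions come for free from the classical fact that conjugation swaps ``at most $n$ parts'' with ``all parts at most $n$'', so nothing needs verification beyond the two elementary encodings. What the paper's route buys is the closed formula for $\phi$, and that is not incidental to the rest of the paper: the explicit formula drives the proof that each linear system is triangular (the ``new'' unknown in the row of $\nu$ is identified as $g_{\phi(\nu)}$ via the decomposition $\nu = \sum_i |k_i|\, l_i$ read off from the formula), and Section \ref{sec4} uses $\phi^{-1}(\lambda) = (\lambda_1+\lambda_2,\lambda_2)$ explicitly. So if your proof were substituted into the paper, you would still want to record the closed form of the bijection (or its inverse), extracted from the conjugation description, before proceeding.
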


\begin{proof}
Each equivalence class has a \emph{unique} representative $\nu \in \N^n$ such that $$\nu_1 \geq \nu_2 \geq \ldots \geq \nu_n.$$ Then define $\phi(\nu) = (\nu_1 - \nu_{2}, \nu_{2} - \nu_{3}, \ldots, \nu_{n-1} - \nu_n, \nu_n) \in \N^n$. An easy computation shows that $||\phi(\nu)|| = d$, and it is straightforward to check that $\phi$ is bijective.
\end{proof}

As a consequence, we now have a square linear system for each $d \in \N$, consisting of one equation for each element of the set $\{ \nu \in \N^n \mid |\nu| = d\} / \sim$ in the variables $\{g_\lambda \mid \lambda \in \N^n, ||\lambda|| = d\}$. So all we have to do, is to solve them. As observed below Example \ref{observation}, we will show that each system is triangular.

\begin{lemma}
For each nonzero $d \in \N$, the square linear system defined by the equations corresponding to each element in $\{ \nu \in \N^n \mid |\nu| = d\} / \sim$ is triangular and its diagonal is identically $1$.
\end{lemma}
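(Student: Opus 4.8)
The plan is to exhibit an explicit order on the index set with respect to which the coefficient matrix is triangular, and the natural candidate turns out to be the \emph{dominance} (majorization) order on partitions, transported to the variables $g_\lambda$ through the bijection $\phi$ of Lemma~\ref{square}. First I would rewrite a single nonzero contribution to the coefficient of $g_\lambda$ in equation~(\ref{system}) in purely combinatorial terms. After the reindexing and support analysis used in the first lemma of this section, such a contribution is given by distinct multi-indices $0 \prec l_1 \prec \ldots \prec l_s$, each lying in $\{0,1\}^n$, where $k_j$ selects $\sigma^{|l_j|}$ with multiplicity $t_j := |k_j| > 0$, subject to $\sum_j t_j l_j = \nu$ and $\lambda_m = \sum_{j : |l_j| = m} t_j$. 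Since $(\sigma_{l_j})^{k_j} = 1$ and $l_j! = 1$ for these data, such a summand contributes $\lambda!\prod_j (t_j!)^{-1}$ to the coefficient of $g_\lambda$, a product of multinomial coefficients (grouped by the value $|l_j|$) that is always a positive integer, equal to $1$ exactly when each size occurs with a single $l_j$.

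The key step is to compare the sorted representative $\nu$ with the partition $\nu' := \phi^{-1}(\lambda)$ attached to the column $g_\lambda$. Writing the inverse of $\phi$ as $\nu'_i = \sum_{m \geq i} \lambda_m$, the defining relations give $\nu'_i = \sum_{j : |l_j| \geq i} t_j$, while $\nu_i = \sum_{j : i \in S_j} t_j$, where $S_j := \{ i : (l_j)_i = 1\}$ is the support of $l_j$. I would then compute the partial sums
\[
\sum_{i=1}^k \nu'_i = \sum_j t_j \min(k, |l_j|) \quad\text{and}\quad \sum_{i=1}^k \nu_i = \sum_j t_j\, \#\!\left(S_j \cap \{1, \ldots, k\}\right),
\]
and observe the termwise inequality $\#(S_j \cap \{1,\ldots,k\}) \leq \min(k, |l_j|)$. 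This yields $\sum_{i\leq k}\nu_i \leq \sum_{i\leq k}\nu'_i$ for every $k$, that is $\nu' \succeq \nu$ in the dominance order, so every variable occurring in the equation indexed by $\nu$ is itself indexed by a partition dominating $\nu$.

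Finally I would analyze the equality case to pin down the diagonal. Equality in all partial sums forces $\#(S_j \cap \{1,\ldots,k\}) = \min(k,|l_j|)$ for all $j$ and $k$, which holds exactly when each $l_j$ is the initial segment $\{1, \ldots, |l_j|\}$; distinctness of the $l_j$ then determines the multiplicities uniquely as $t_m = \nu_m - \nu_{m+1}$, reproducing the single nested decomposition whose profile is precisely $\phi(\nu)$ and whose weight $\lambda!\prod_m (\lambda_m!)^{-1}$ equals $1$. Hence the coefficient of $g_{\phi(\nu)}$ is $1$, while any column with $\lambda \neq \phi(\nu)$ corresponds to $\nu' = \phi^{-1}(\lambda) \neq \nu$ and therefore to a partition \emph{strictly} dominating $\nu$. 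Ordering equations and variables by any linear extension of the dominance order thus makes the system triangular with unit diagonal, and in particular invertible.

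The main obstacle I anticipate is identifying the correct order in the first place: ordering by $|\lambda|$ or lexicographically does not suffice, since two distinct columns can share the same degree $|\lambda|$ inside a single equation. This is already visible for $\nu = (2,1,1)$, where the profiles $(1,0,1) = \phi(\nu)$ and $(0,2,0)$ both occur with $|\lambda| = 2$, and only dominance separates them (the latter being $\phi(2,2,0)$ with $(2,2,0) \succ (2,1,1)$). Once dominance is taken as the guiding principle, the remaining work is the elementary termwise inequality and its equality analysis; the only points requiring care are that $\nu'_i = \sum_{j:|l_j|\geq i} t_j$ is automatically non-increasing and that $|\nu| = |\nu'| = d$, so that the dominance comparison between $\nu$ and $\nu'$ is legitimate.
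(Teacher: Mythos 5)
Your proof is correct, and it takes a genuinely different route from the paper's. The paper fixes the reverse lexicographic order on sorted representatives, constructs the nested decomposition realizing $\lambda = \phi(\nu)$, asserts that this $\lambda$ cannot occur in any equation coming earlier, and then obtains triangularity \emph{indirectly}: a pigeonhole count against the squareness of the system (Lemma \ref{square}) shows each equation introduces exactly one new variable; the diagonal value $1$ is read off from the constructed element of $p(\nu,\lambda)$. You instead prove a sharper structural fact. Encoding a nonzero contribution as data $(l_j, t_j)$ with $\sum_j t_j l_j = \nu$ and $\lambda_m = \sum_{j : |l_j| = m} t_j$, your partial-sum computation $\sum_{i \le k} \nu_i = \sum_j t_j \#\bigl(S_j \cap \{1,\ldots,k\}\bigr) \le \sum_j t_j \min(k, |l_j|) = \sum_{i \le k} \phi^{-1}(\lambda)_i$ shows that every column with a nonzero entry in row $\nu$ is indexed by a partition dominating $\nu$, and your equality analysis (each $l_j$ an initial segment, hence the sizes $|l_j|$ distinct) shows the diagonal entry comes from a single contribution of value $\lambda!\prod_j (t_j!)^{-1} = 1$. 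This buys several things the paper's argument does not: it is self-contained (no appeal to the counting of Lemma \ref{square}), it locates all possible nonzero entries (the dominance cone above $\nu$), it exhibits every coefficient as a product of multinomial coefficients and hence a positive integer, and it fills in the step the paper dismisses with ``it is easy to see.'' The paper's argument is in exchange shorter, reusing work already done. One correction to your closing remark: lexicographic order on the sorted representatives $\phi^{-1}(\lambda)$ --- which is exactly what the paper uses, reversed --- \emph{is} a linear extension of dominance on partitions of fixed size $d$, so it does suffice and does separate your example, since $(2,2,0) > (2,1,1)$ lexicographically. What genuinely fails is ordering columns by $|\lambda|$ or by lexicographic order on the indices $\lambda$ themselves without transporting through $\phi$; your $\nu = (2,1,1)$ example does show that, as the column $(0,2,0)$ is lexicographically smaller than the diagonal column $(1,0,1)$ yet must sit strictly before it.
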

\begin{proof}
We order the set $\{ \nu \in \N^n \mid |\nu| = d\} / \sim$ by the reverse lexicographical order on the unique representative satisfying $\nu_1 \geq \nu_2 \geq \ldots \geq \nu_n$, so the equivalence class of $(d,0, \ldots 0)$ is the smallest element. We will show that going to the next equivalence class, one finds exactly one new $g_\lambda$ and $\lambda$ is given by the map $\phi$ of Lemma \ref{square}. In this way, the system is lower triangular. For simplicity, we will identify each equivalence class with its unique representative as before, and the unknown $g_\lambda$ with its index $\lambda$.

The key is to understand the map $\phi$ better. Let us explain how $\phi$ constructs a particular element of $p(\nu,\lambda)$. The function $\phi$ does the following: given $\nu$, with $\nu_1 \geq \nu_2, \ldots \geq \nu_n$, one first uses $\nu_n$ times the vector $(1, \ldots, 1)$ to decompose $\nu$, so we obtain: $$\nu = (\nu_1 - \nu_n, \ldots, \nu_{n-1} - \nu_n, 0) + \nu_n(1, \ldots, 1).$$ Consequently, we can use $\nu_{n-1}-\nu_n$ times $(1, \ldots, 1, 0)$ and continue this proces. In this way, we have written $\nu = \sum_{i = 1}^n |k_i| l_i$, where $|k_i| = \nu_i - \nu_{i+1}$ (define $\nu_{n+1} = 0$), $l_i = (1, \ldots, 1, 0, \ldots, 0)$ ($i$ entries are $1$), and $\sum_{i=1}^n k_i = \lambda = \phi(\nu)$.

Now, for the smallest element, namely $(d, 0, \ldots, 0)$, we have seen in Example \ref{observation} that there is only one element in $p(\nu,\lambda)$, where $\lambda = (d, 0, \ldots, 0) = \phi(\nu)$. Now, it is easy to see that if $\mu < \nu$, then one cannot decompose $\mu$ according to $\phi(\nu)$. Now the proof follows from a pigeon hole type argument: if one goes to the next $\nu$, one finds a least one new $\lambda$, namely $\phi(\nu)$. Since we have a square system, it cannot be more than one.

To see that the diagonal is identically one, note that there is only one element in $p(\nu,\lambda)$ with $\lambda = \phi(\nu)$ (it is constructed above), that it yields the coefficient $1$.
\end{proof}

As a final remark: if $f$ was a polynomial of degree $d$, the coefficients of $f$ determine all coefficients of a polynomial $g$ with weights at most $d$. In particular, the degree of $g$ is as most $d$. In the real analytic case, since $f$ is the uniform limit of its Taylor polynomials on $K$, $g$ is the uniform limit of the polynomials obtained from the Taylor polynomials via the symmetric function theorem.

\section{Examples and remarks} \label{sec3}

We start by giving some remarks, mostly concerning the coefficients of the linear systems involved in the symmetric function theorem. In particular, we comment on how they have a combinatorial interpretation.

\begin{remark}
In \cite[p. 22-23]{symmsurvey} one discusses the use of the lexicographical order in proofs of the symmetric function theorem. We think that it is elegant that we somehow deduce the ``symmetric" lexicographical order from our proof method here. Example \ref{observation} indicates that we should order the equations with respect to the lexicographical order, although on the unique representative of each class, as in the proof of Lemma \ref{square}. We think this is quite natural, especially after writing down some examples of equations. In this way, one also observes that the ``new $\lambda$" that occurs in the next equation, is given by the map $\phi$.
\end{remark}

\begin{remark}
In many proofs, one initially assumes $f$ to be homogenous of degree $d$, by considering each degree separately. This corresponds to the distinct linear systems for each $d \in \N$ here.
\end{remark}

\begin{remark}
Note that in (\ref{system}) we don't have that $\lambda! \prod_{j=1}^s (1/k_j) = 1$ in general, despite the particular form of the $k_j$'s. This is due to the fact that it might happen that one picks different $l_j$'s from the same $\sigma^k$. For example, see Example \ref{observation} $(3)$.
\end{remark}

\begin{remark}
In theory, the coefficients of equation (\ref{system}) are explicit here, in contrast to the proofs based on induction (of course, assuming the non-trivial Proposition \ref{faa}). Usually, the interpretation of $$\nu! \prod_{j=1}^n \frac{1}{k_j! (l_j)^{|k_j|}}$$ is related to partitions of $\nu$. In the univariate case, it is the number of partitions of a set with $\nu$ elements into $k_1$ sets with $l_1$ elements \ldots, and are related to Stirling numbers of the second kind (see \cite[Formula 1.2]{faa}). More precisely, given a $\lambda \in \N$, summing all of these gives the number of partitions of $\nu$ into $\lambda$ nonempty sets. Of course, we work with tuples here, but the interesting part is that we only count specific kind of partitions, which the authors think of as binary decompositions of $\nu$.
\end{remark}

\begin{remark}
In our case, since the nonzero contributions in (\ref{system}) have $l_j \in \{0,1\}^n$, it follows that $(l_j!)^{|k_j|} = 1$. Of course, if one would write the power series $h(x) = \sum h_\nu/\nu! x^\nu$ in Theorem \ref{faa} as $\sum h'_\nu x^\nu$, one can also ignore the $l_j$'s. The point of writing it in this way is to give the combinatorial interpretation above. Nevertheless, in our case only the $k_j$'s play a role.
\end{remark}

\begin{remark} \label{column1}
In Example \ref{observation} $(3)$, we actually made the first column of each system explicit. Let $d \in \N$ and $\nu \in \N^n$ of degree $d$. To compute the coefficients of the first column, one has to compute the nonzero contributions of elements of $p(\nu,\lambda)$ with $\lambda = (d, 0, \ldots, 0)$. Now, there is only one such element, given in Example \ref{observation} $(3)$, thus the coefficient of $g_{(d,0, \ldots, 0)}$ in the equation corresponding to $\nu$ is:
\[
\frac{\lambda!}{\prod_{j=1}^n k_j! (l_j!)^{|k_j|}}  = \frac{d!}{\nu!}.
\]
\end{remark}

By the preceding remarks, the system has the following form, for $n \geq 3$.
\begin{equation}
\begin{pmatrix}
1 &  &  &  &  &  &  &  &  &  &  \\
 & 1 &  &  &  &  &  &  &  &  &  \\
 & 2 & 1 &  &  &  &  &  &  & & \\
 &  &  & 1 &  &  &  &  &  & & \\
 &  &  & 3 & 1 &  &  &  &  & & \\
  &  &  & 6 & \ast & 1 &  &  &  &  & \\
 &  &  &  &  &  & 1 &  &  &  &  \\
 &  &  &  &  & & 4 & 1 &  &  & \\
 &  &  &  &  & & 6 &  \ast & 1 &  & \\
 &  &  &  &  & & 12 &  \ast & \ast & 1 & \\
 &  &  &  &  & &  &  &  &  & \ddots
\end{pmatrix}
\begin{pmatrix}
g_{(1, 0, \ldots, 0)} \\
g_{(2, 0, \ldots, 0)} \\
g_{(0,1, 0, \ldots, 0)} \\
g_{(3,0, 0, \ldots, 0)} \\
g_{(1,1, 0, \ldots, 0)} \\
g_{(0,0,1,0, \ldots, 0)} \\
g_{(4, 0, \ldots, 0)} \\
g_{(2,1, 0, \ldots, 0)} \\
g_{(0,2,0, \ldots, 0)} \\
g_{(1, 0, 1, 0, \ldots, 0)} \\
\vdots
\end{pmatrix}
=
\begin{pmatrix}
f_{(1, 0, \ldots, 0)} \\
f_{(2, 0, \ldots, 0)} \\
f_{(1,1, 0, \ldots, 0)} \\
f_{(3,0, \ldots, 0)} \\
f_{(2,1, 0, \ldots, 0)} \\
f_{(1,1,1, 0, \ldots, 0)} \\
f_{(4,0, \ldots, 0)} \\
f_{(3,1, 0, \ldots, 0)} \\
f_{(2,2, 0, \ldots, 0)} \\
f_{(2,1,1, 0, \ldots, 0)} \\
\vdots
\end{pmatrix}
\end{equation}

For general $d,n \in \N$, it seems more difficult to compute the $\ast$ explicitly. For example, in the above, consider $\nu = (2,1,1, 0, \ldots, 0)$ and $\lambda = (2, 1, 0, \ldots, 0)$. The problem is that there are several elements in $p(\nu,\lambda)$ now, depending on which $l_j$ is selected from $\sigma^2$, i.e. $(1,1,0, 0 \ldots, 0)$, $(1,0,1,0,\ldots,0)$ or $(0,1,1,0,\ldots,0)$. Moreover, the coefficient corresponding to the choice is not always the same. In the first case, which is the element
\begin{multline*}
k_1 = (1, 0, 0 \ldots, 0), l_1 = (1, 0, 0, 0 \ldots, 0), k_2 = (1, 0, 0, \ldots, 0), l_2 = (0,0,1,0, \ldots, 0) \\ k_3 = (0,1,0, \ldots, 0), l_3 = (1,1,0, 0, \ldots, 0)
\end{multline*}
of $p(\nu,\lambda)$, the coefficient is $\lambda!/(k_1!k_2!k_3!) = 2!/1!1!1! = 2.$ However, in the last case, which is the element
\begin{equation*}
k_1 = (2, 0, 0 \ldots, 0), l_1 = (1, 0, 0, 0 \ldots, 0), k_2 = (0, 1, 0, \ldots, 0), l_2 = (0,1,1,0, \ldots, 0)
\end{equation*}
of $p(\nu,\lambda)$, its coefficient is $1$. However, this ``choice phenomenon" cannot happen if $n = 2$. We will make the system completely explicit in the next section.

Finally, we give some examples.

\begin{ex} \hspace{1cm}
\begin{enumerate}
\item Let $f(x_1,x_2) = x_1 + x_2 + 3x_1^2 + 3x_2^2 -5x_1x_2$. There is only one equation for $d = 1$, which is $f_{10} = 1 = g_{10}$. For $d = 2$, we find the system:
\begin{align*}
f_{20} = 3 &= g_{20}, \\
f_{11} =-5 &= 2g_{20} + g_{01}.
\end{align*}
Thus $g_{20} = 3$ and $g_{01} = -11$. Since there are no $\nu$ of higher degree, we have found $g$, and, indeed, we see that
\[
g(\sigma(x)) = (x_1+x_2) +3(x_1+x_2)^2 -11x_1x_2 = f(x_1,x_2),
\]
where $g(x_1,x_2) = x_1 + 3x_1^2 - 11x_2$.

\item Let $f(x_1,x_2,x_3) = 3x_1x_2x_3 - x_1x_3^2 - x_1^2x_3 - x_2x_3^2 - x_2^2x_3 - x_1x_2^2 - x_1^2x_2 + x_1 + x_2 + x_3$. Again, for $d = 1$, we simply have $g_{100} = 1 = f_{100}$. The equations for $d = 2$ give the trivial solution, since $f$ has no monomials of degree $2$. We can find the other coefficients using the degree $3$ equations:
\begin{align*}
f_{300} &= g_{300} \\
f_{210} &= 3g_{300} + g_{110} \\
f_{111} &=  6g_{300} +  3 g_{110} +  g_{001} \\
\end{align*}
Yielding consecutively:
\begin{align*}
g_{300} &= 0 \\
g_{110} &= -1\\
g_{001} &= 3 - 3(-1) = 6 \\
\end{align*}
So we obtain $g(x_1,x_2,x_3) = x_1 - x_1x_2 + 6x_3$.

\item Consider the function $f(x_1,x_2) = e^{x_1+x_2}$. Now clearly, $g(x_1,x_2) = e^{x_1}$. But now we can compute the Taylor expansion of $f$, using the simpler expansion of $g$ (so we reverse the problem), which is the well known:
\[
g(x_1,x_2) = \sum_{i \in \N} \frac{1}{i!}x_1^i.
\]
For degree $1$, we have $f_{10} = g_{10} = 1$. For $d = 2$, we solve:
\begin{align*}
f_{20} &= g_{20} = 1, \\
f_{11} &= 2g_{20} + g_{01} = 2,
\end{align*}
and for $d = 3$:
\begin{align*}
f_{30} &= g_{30} = 1 \\
f_{21} &= 3g_{30} + g_{11} = 3 \\
\end{align*}
Consequently, the first few terms of the Taylor expansion of $f(x_1,x_2)$ are: $$x_1 + x_2 + x_1^2 + 2x_1x_2 + x_2^2 + x_1^3 + 3x_1^2x_2 + 3x_1x_2^2 + x_2^3 +\ldots$$
Of course, one could verify this easily in this example.
\end{enumerate}
\end{ex}

\section{The system for $n = 2$} \label{sec4}

In this section we will make all coefficients in the systems explicit for $n =2$. The key will be Lemma \ref{unique}. The proof of the first lemma is obvious.

\begin{lemma} \label{alleq}
Let $d \in \N$. Then $(d,0) < (d-1,1) < \ldots < (\lceil d/2 \rceil, \lfloor d/2 \rfloor)$ are the unique representatives of $\{ \nu \in \N^2 \mid |\nu| = d\} / \sim$ that satisfy $\nu_1 \geq \nu_2$. 
\end{lemma}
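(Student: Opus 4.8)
The plan is to exploit that for $n = 2$ the equivalence relation $\sim$ is generated by the single transposition swapping the two coordinates, so the class of $(\nu_1, \nu_2)$ is simply $\{(\nu_1, \nu_2), (\nu_2, \nu_1)\}$. First I would check that each such class contains exactly one representative with $\nu_1 \geq \nu_2$: if $\nu_1 \neq \nu_2$, then precisely one of the two orderings of the pair satisfies the inequality; if $\nu_1 = \nu_2$, the class is already a singleton and satisfies it. This is the $n = 2$ instance of the uniqueness statement recorded in the proof of Lemma \ref{square}, so no further argument is needed here.

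Next I would parametrize the representatives of degree $d$. Writing $\nu = (a, d - a)$ with $0 \leq a \leq d$, the condition $\nu_1 \geq \nu_2$ reads $a \geq d - a$, equivalently $a \geq d/2$, which for integer $a$ means $a \geq \lceil d/2 \rceil$. Together with $a \leq d$, the admissible first coordinates are exactly $a \in \{\lceil d/2 \rceil, \lceil d/2 \rceil + 1, \ldots, d\}$. The smallest admissible $a = \lceil d/2 \rceil$ gives second coordinate $d - \lceil d/2 \rceil = \lfloor d/2 \rfloor$, so the representatives are precisely $(d,0), (d-1,1), \ldots, (\lceil d/2 \rceil, \lfloor d/2 \rfloor)$, and there are no others.

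Finally I would confirm the ordering. Under the reverse lexicographical order on unique representatives used earlier (for which $(d,0)$ is the smallest element), one has $(a, d-a) < (a', d-a')$ exactly when $a > a'$; hence decreasing the first coordinate by one passes to the next larger element. This shows the displayed chain is the correct increasing enumeration. The only point requiring care is the parity bookkeeping at the middle term, namely verifying that $(\lceil d/2 \rceil, \lfloor d/2 \rfloor)$ is the right top of the list for both even $d$ (where it equals $(d/2, d/2)$) and odd $d$ (where it equals $((d+1)/2, (d-1)/2)$); matching the ceiling and floor to the inequality $a \geq d/2$ handles both cases simultaneously, so there is no genuine obstacle.
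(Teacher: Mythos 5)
Your proof is correct, and it is exactly the routine verification the paper has in mind when it dismisses this lemma with ``the proof of the first lemma is obvious'': each class $\{(\nu_1,\nu_2),(\nu_2,\nu_1)\}$ has a unique representative with $\nu_1 \geq \nu_2$, the representatives of degree $d$ are parametrized by $a \in \{\lceil d/2\rceil,\ldots,d\}$, and the reverse lexicographical order from the proof of triangularity lists them as stated. Nothing is missing, and nothing differs from the intended argument.
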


Next we compute the elements of $p(\nu,\lambda)$.

\begin{lemma} \label{unique}
Let $\nu \in \N^2$ with $\nu_1 \geq \nu_2$, $\lambda \in \N^2$ with $||\lambda|| = |\nu|$. Then $p(\nu,\lambda)$ is empty if $\phi^{-1}(\lambda) > \nu$. If $p(\nu,\lambda)$ is not empty, then it has a unique element given by:
\[
k_1 = (\nu_1-\lambda_2, 0), l_1 = (1,0), k_2 = (\nu_2-\lambda_2, 0), l_2 = (0,1), k_3 = (0, \lambda_2), l_3 = (1,1).
\]
\end{lemma}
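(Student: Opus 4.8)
The plan is to exploit, exactly as in the proof of the first lemma of Section \ref{sec2}, the sparse support of the two elementary symmetric polynomials. For $n = 2$ we have that the support of $\sigma^1$ is $\{(1,0),(0,1)\}$ and the support of $\sigma^2$ is $\{(1,1)\}$, so any $l_j$ occurring in a nonzero contribution to the coefficient of $g_\lambda$ must lie in $\{(1,0),(0,1),(1,1)\}$, and the paired $k_j$ is forced to select the corresponding component: $l_j = (1,0)$ or $(0,1)$ forces $k_j = (t_j, 0)$, while $l_j = (1,1)$ forces $k_j = (0, t_j)$. Since the $l_j$ in an element of $p(\nu,\lambda)$ are pairwise distinct (they are strictly increasing for $\prec$), each of the three admissible values occurs at most once. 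Hence, after reindexing, every element of $p(\nu,\lambda)$ with a nonzero contribution is completely described by three nonnegative integers $a,b,c$: the multiplicity $a$ attached to $l = (1,0)$, the multiplicity $b$ attached to $l = (0,1)$, and the multiplicity $c$ attached to $l = (1,1)$.

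First I would write the two defining relations of $p(\nu,\lambda)$ in terms of $a,b,c$. The relation $\sum_j k_j = \lambda$ becomes $a + b = \lambda_1$ and $c = \lambda_2$, while $\sum_j |k_j| l_j = \nu$ becomes $a + c = \nu_1$ and $b + c = \nu_2$. This linear system has the unique solution $c = \lambda_2$, $a = \nu_1 - \lambda_2$, $b = \nu_2 - \lambda_2$; the apparent over-determinacy is harmless, since $a + b = \lambda_1$ is automatically satisfied once one uses the hypothesis $||\lambda|| = |\nu|$, i.e. $\lambda_1 + 2\lambda_2 = \nu_1 + \nu_2$. Reading $(a,b,c)$ back as the pairs $k_1 = (\nu_1 - \lambda_2, 0)$, $k_2 = (\nu_2 - \lambda_2, 0)$, $k_3 = (0,\lambda_2)$ with $l_1 = (1,0)$, $l_2 = (0,1)$, $l_3 = (1,1)$ reproduces exactly the element in the statement, and uniqueness is immediate because the solution of the linear system is unique.

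It then remains to decide when this candidate is a genuine element of $p(\nu,\lambda)$, i.e. when all three multiplicities are nonnegative. Since $\lambda_2 \geq 0$ and $\nu_1 \geq \nu_2 \geq \lambda_2$ whenever $b \geq 0$, the only binding condition is $b = \nu_2 - \lambda_2 \geq 0$, that is $\lambda_2 \leq \nu_2$. Finally I would translate this into the order on representatives: writing $\phi^{-1}(\lambda) = (\lambda_1 + \lambda_2, \lambda_2)$ via the explicit form of $\phi$ from Lemma \ref{square}, both $\phi^{-1}(\lambda)$ and $\nu$ are degree-$d$ representatives with first coordinate at least the second, so by the ordering of Lemma \ref{alleq} the comparison $\phi^{-1}(\lambda) > \nu$ reduces to comparing second coordinates, namely $\lambda_2 > \nu_2$. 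Thus $\phi^{-1}(\lambda) > \nu$ holds precisely when the unique candidate fails nonnegativity, and in that case $p(\nu,\lambda)$ is empty. The argument is essentially a bookkeeping verification once the support reduction is in place; the only step that needs care is matching the emptiness criterion $\lambda_2 > \nu_2$ to the stated order-theoretic condition $\phi^{-1}(\lambda) > \nu$.
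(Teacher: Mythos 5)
Your proof is correct and follows essentially the same route as the paper: both reduce, via the disjoint supports of $\sigma^1$ and $\sigma^2$, to a decomposition $\nu = a(1,0) + b(0,1) + c(1,1)$ with $\sum_j k_j = \lambda$, solve the resulting linear system uniquely, and identify nonnegativity of $\nu_2 - \lambda_2$ with the condition $\phi^{-1}(\lambda) \leq \nu$. The only presentational difference is that you obtain the emptiness direction directly from the failure of nonnegativity of the unique candidate, whereas the paper cites the previously established lower-triangularity of the system for that half.
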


\begin{proof} 
Firstly, note that $\phi^{-1}(\lambda) \leq \nu$ simply means that the row index is higher or equal to the column index. We already know that each system is lower triangular, which followed from the fact that $p(\nu,\lambda)$ is indeed empty if $\phi(\lambda)>\nu$.

So, assume $\phi^{-1}(\lambda) \leq \nu$. We will show that a decomposition of $\nu$ of the form
\[
\nu = |k_1|(1,0) + |k_2|(0,1) + |k_3|(1,1),
\]
satisfying the conditions of Proposition \ref{faa}, necessarily forces the rest of the statement. We find immediately that $k_3 = (0,\lambda_2)$, since we need to have that $$\sum_{j=1}^3 k_j = \lambda.$$ Hence, rewriting the equation above, we find:
\[
\nu - \lambda_2(1,1) = (\nu_1 - \lambda_2, \nu_2 - \lambda_2)  = |k_1|(1,0) + |k_2|(0,1).
\]
Now, if $\lambda_2 > \nu_2$, this is impossible, but since $\phi^{-1}(\lambda) = (\lambda_1+\lambda_2, \lambda_2) \leq \nu$, it follows that $\nu_2 \geq \lambda_2$. Since we have $\nu_1 \geq \nu_2$, also $\nu_1 - \lambda_2 \geq 0$. The above equation then forces:
\begin{align*}
k_1 &= (\nu_1 - \lambda_2, 0), \\
k_2 &= (\nu_2 - \lambda_2, 0).
\end{align*}
\end{proof}

\begin{remark}
It might seem that in the lemma above, one should add that $\lambda_1 = |\nu| - 2\lambda_2$, but this follows from $||\lambda|| = |\nu|$.
\end{remark}

\begin{remark}
Note that the condition $\phi^{-1}(\lambda) \leq \nu$ is equivalent to $\lambda_2 - \nu_2 \geq 0$.
\end{remark}

\begin{corollary} \label{coeffn2}
The coefficient of $g_\lambda$ in the equation (\ref{system}) corresponding to $\nu \in \N^2$ is given by
\[
\frac{\lambda_1!}{(\nu_1-\lambda_2)!(\nu_2-\lambda_2)!},
\]
if $||\lambda|| = |\nu|$ and $\lambda_2 - \nu_2 \geq 0$, and $0$ otherwise.
\end{corollary}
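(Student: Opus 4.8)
The plan is to combine Lemma~\ref{unique} with the general coefficient formula appearing in the sum~(\ref{system}). Corollary~\ref{coeffn2} is essentially just reading off the explicit coefficient from the unique element of $p(\nu,\lambda)$ produced by Lemma~\ref{unique}, so the work is mostly bookkeeping. First I would recall that, by the first Lemma of Section~\ref{sec2}, the coefficient of $g_\lambda$ in the equation corresponding to $\nu$ is zero unless $||\lambda|| = |\nu|$; this justifies the first case distinction in the statement. Next, by the remark following Lemma~\ref{unique}, the condition $\phi^{-1}(\lambda) \leq \nu$ is equivalent to $\lambda_2 - \nu_2 \geq 0$, and Lemma~\ref{unique} tells us precisely that $p(\nu,\lambda)$ is empty when this fails, giving the coefficient $0$ in that case.

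It remains to treat the case $||\lambda|| = |\nu|$ and $\lambda_2 - \nu_2 \geq 0$. Here Lemma~\ref{unique} guarantees that $p(\nu,\lambda)$ has a single element, namely
\[
k_1 = (\nu_1-\lambda_2, 0),\quad l_1 = (1,0),\quad k_2 = (\nu_2-\lambda_2, 0),\quad l_2 = (0,1),\quad k_3 = (0, \lambda_2),\quad l_3 = (1,1).
\]
The coefficient of $g_\lambda$ is then obtained by substituting this unique element into the general term
\[
\lambda!\,\prod_{j=1}^{n} \frac{1}{ k_j! \,(l_j!)^{|k_j|} }.
\]
Since each $l_j$ lies in $\{0,1\}^2$, we have $l_j! = 1$ and hence $(l_j!)^{|k_j|} = 1$, as noted in the final remark of Section~\ref{sec3}; this kills the denominators coming from the $l_j$. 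What survives is $\lambda!$ divided by $k_1!\,k_2!\,k_3!$.

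Finally I would evaluate the factorials explicitly. For a multi-index of the shape $(a,0)$ or $(0,a)$ we have $(a,0)! = a!$ and $(0,a)! = a!$, so $k_1! = (\nu_1-\lambda_2)!$, $k_2! = (\nu_2-\lambda_2)!$ and $k_3! = \lambda_2!$. Similarly $\lambda! = \lambda_1!\,\lambda_2!$. Thus the coefficient is
\[
\frac{\lambda_1!\,\lambda_2!}{(\nu_1-\lambda_2)!\,(\nu_2-\lambda_2)!\,\lambda_2!} = \frac{\lambda_1!}{(\nu_1-\lambda_2)!\,(\nu_2-\lambda_2)!},
\]
which is exactly the claimed expression. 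I do not anticipate a genuine obstacle here: the content is entirely carried by Lemma~\ref{unique}, and the remaining step is the observation that the $\lambda_2!$ arising from $\lambda!$ cancels the $k_3! = \lambda_2!$ in the denominator. The only point requiring a little care is making sure the two case distinctions in the statement match the hypotheses of Lemma~\ref{unique} correctly, which is handled by the equivalence $\phi^{-1}(\lambda)\leq\nu \iff \lambda_2-\nu_2\geq 0$ recorded in the preceding remark.
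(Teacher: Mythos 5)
Your proposal is correct and is essentially the paper's own (implicit) proof: the paper states Corollary \ref{coeffn2} without further argument precisely because, as you do, one substitutes the unique element from Lemma \ref{unique} into the term $\lambda!\prod_j \bigl(k_j!\,(l_j!)^{|k_j|}\bigr)^{-1}$ (the factors $(\sigma_{l_j})^{k_j}$ being $1$ for this element), notes $(l_j!)^{|k_j|}=1$, and cancels the $\lambda_2!$ against $k_3!$. One caveat, inherited from the paper rather than introduced by you: the inequality $\lambda_2-\nu_2\geq 0$ in the statement and in the remark you cite has the sign reversed --- the proof of Lemma \ref{unique} shows that $\phi^{-1}(\lambda)\leq\nu$ is equivalent to $\nu_2-\lambda_2\geq 0$, which is also what the factorials $(\nu_1-\lambda_2)!$ and $(\nu_2-\lambda_2)!$ require to be defined --- so your appeal to that remark should be read with the corrected sign.
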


Note that this agrees with our result in Remark \ref{column1}. Combining Lemma \ref{alleq} and Corollary \ref{coeffn2}, this system looks as follows for $d \in \N$:
\begin{equation*}
\begin{pmatrix}
1 &  &  &  \\
d & 1 &  &  \\
\vdots & \vdots & \ddots &  \\
\frac{d!}{(\lceil d/2 \rceil)!(\lfloor d/2 \rfloor)!} & \frac{(d-2)!}{(\lceil d/2 \rceil - 1)!(\lfloor d/2 \rfloor -1)!} & \cdots & 1
\end{pmatrix}
\begin{pmatrix}
g_{(d,0)} \\
g_{(d-2,1)} \\
\vdots \\
g_{(d \mod 2, \lfloor d/2 \rfloor)}
\end{pmatrix}
 = 
\begin{pmatrix}
f_{(d,0)} \\
f_{(d-1,1)} \\
\vdots \\
f_{(\lceil d/2 \rceil, \lfloor d/2 \rfloor)}
\end{pmatrix}.
\end{equation*}

\bibliographystyle{alpha}
\bibliography{symmthm}

\end{document}